\numberwithin{equation}{section}
\numberwithin{figure}{section}
\theoremstyle{plain}
\newtheorem{thm}{\protect\theoremname}
\theoremstyle{plain}
\newtheorem{lem}[thm]{\protect\lemmaname}
\theoremstyle{plain}
\newtheorem{prop}[thm]{\protect\propositionname}
\theoremstyle{definition}
\newtheorem{defn}[thm]{\protect\definitionname}
\theoremstyle{definition}
\newtheorem{example}[thm]{\protect\examplename}
\theoremstyle{plain}
\newtheorem{cor}[thm]{\protect\corollaryname}
\theoremstyle{remark}
\newtheorem{rem}[thm]{\protect\remarkname}
\providecommand{\corollaryname}{Corollary}
\providecommand{\definitionname}{Definition}
\providecommand{\examplename}{Example}
\providecommand{\lemmaname}{Lemma}
\providecommand{\propositionname}{Proposition}
\providecommand{\remarkname}{Remark}
\providecommand{\theoremname}{Theorem}
\begin{document}
\title{Grothendieck Rings of Queer Lie Superalgebras}
\author{Shifra Reif}
\begin{abstract}
We determine the Grothendieck rings of the category of finite-dimensional
modules over Queer Lie superalgebras via their rings of characters.
In particular, we show that the $\mathbb{Q}$-span of the ring of
characters of the Queer Lie supergroup $Q(n)$ is isomorphic to the
ring of Laurent polynomials in $x_{1},\ldots,x_{n}$ for which the
evaluation $x_{n-1}=-x_{n}=t$ is independent of $t$. We thus complete
the description of Grothendieck rings for all classical Lie superalgebras. 
\end{abstract}

\maketitle

\section{Introduction}

The Grothendieck group is an invariant of an abelian category defined
to be the $\mathbb{Z}$-span of all objects, modulo the relation $[A]+[C]=[B]$
for every exact sequence $0\rightarrow A\rightarrow B\rightarrow C\rightarrow0$.
When the category is a tensor category, the Grothendieck group inherits
a ring structure via $[A]\cdot[B]=[A\otimes B]$. For finite-dimensional
modules over semisimple Lie algebras, the Grothendieck ring is isomorphic
to the ring of characters due to the highest weight description of
simple modules, c.f.\ e.g.\ \cite[Prop. 4.2]{SV}. For Lie superalgebras,
the ring of characters is the quotient of the Grothendieck ring by
the relation identifying a module with its parity shift. 

In this paper, we determine the ring of characters for queer Lie superalgebras.
These algebras are one of the strange series in Kac's classification
of simple Lie superalgebras \cite{K}, and is the remaining classical
series for which the Grothendieck ring is described. For the queer
Lie supergroup $Q(n)$, the description yields the following theorem. 
\begin{thm}
\label{main theorem} The $\mathbb{Q}$-span of the ring of characters
of $Q(n)$ is isomorphic to the ring 
\[
\left\{ f\in\mathbb{Q}\left[x_{1}^{\pm1},\ldots,x_{n}^{\pm1}\right]^{S_{n}}\,:\,f\mid_{x_{1}=-x_{2}=t}\text{ is independent of }t\right\} .
\]
\end{thm}

The description of the ring of characters over $\mathbb{Z}$, given
in Proposition \ref{prop:ring-over-Z}, is more subtle since the dimensions
of weight spaces are certain powers of $2$. Theorem \ref{main theorem}
is a natural extension of the description of Grothendieck rings for
the other classical Lie algebras and superalgebras: The ring of $S_{n}$-invariant
Laurent polynomials is the ring of characters of the Lie algebra $\mathfrak{gl}(n)$.
Sergeev and Veselov showed in \cite{SV} that such an evaluation condition
characterizes the ring of characters for basic-classical Lie superalgebras.
In \cite{IRS}, Im, Serganova and the author determine the Grothendieck
ring for periplectic Lie superalgebras. 

For basic-classical Lie superalgebras, the ring of characters is isomorphic
to the ring of supercharacters. For the queer Lie superalgebra this
is not the case. In fact, Cheng \cite{Che} showed that supercharacters
of a finite-dimensional nontrivial simple $\mathfrak{q}(n)$-modules
are zero. Another obstacle which arsis is the fact that the dimensions
of the highest weight spaces of a simple modules can be larger than
one. 

 To prove that all characters satisfy invariance conditions, we restrict
to low-rank subalgebras. To show the converse, namely that every invariant
function is a linear combination of characters, we use a basis of
the ring of characters which consists of Euler characteristics of
certain cohomologies introduced by Penkov and Serganova \cite{PS}.
We first prove the result for the queer Lie supergroup $Q(n)$, and
then extend it to the other queer Lie supergroups and carry it over
to Lie superalgebras. We also give a description of $J_{n}$ in terms
of invariance under the Weyl groupoid. 

\subsection*{Acknowledgments}

The author is thankful to Maria Gorelik and Vera Serganova for valuable
conversations. This project is partially supported by Israel Science
Foundations Grants No.\ 1221/17 and\ 1957/21. 

\section{The queer Lie superalgebra and its finite-dimensional representations}

We recall the structure and representation theory of the queer Lie
superalgebras. One of their main difficulty is that they do not posses
an even invariant bilinear form, moreover their Cartan subalgebras
are not purely even.

\subsection{Preliminaries}

The family of queer Lie superalgebras consist of: the subalgebra $\mathfrak{q}(n)$
of $\mathfrak{gl}(n|n)$ consisting of matrices of the form $\left[\begin{array}{cc}
A & B\\
B & A
\end{array}\right]$ where $A$, $B$ are $n\times n$ matrices,
\[
\mathfrak{sq}(n):=\left\{ \left[\begin{array}{cc}
A & B\\
B & A
\end{array}\right]\in\mathfrak{q}(n)\ :\ trB=0\right\} ,
\]
$\mathfrak{pq}(n):=\mathfrak{q}(n)/\mathbb{C}I$ and $\mathfrak{psq}(n):=\mathfrak{sq}(n)/\mathbb{C}I$.
The even parts of $\mathfrak{q}(n)$ and $\mathfrak{sq}(n)$ are isomorphic
to $\mathfrak{gl}(n)$, whereas the even parts of $\mathfrak{pq}(n)$
and $\mathfrak{psq}(n)$ are isomorphic to $\mathfrak{sl}(n)$.

The Cartan subalgebra $\mathfrak{h}=\mathfrak{h}_{\bar{0}}\oplus\mathfrak{h}_{\bar{1}}$
of $\mathfrak{q(n)}$ and $\mathfrak{sq}(n)$ consists of matrices
of the form $\left[\begin{array}{cc}
A & B\\
B & A
\end{array}\right]$ where $A$ and $B$ are diagonal $n\times n$ matrices. For $i=1,\ldots,n$,
let $H_{i}=E_{ii}+E_{n+i,n+i}$. Let $\varepsilon_{1},\ldots,\varepsilon_{n}$
denote the basis of $\mathfrak{h}_{\bar{0}}^{*}$ dual to $H_{1},\ldots,H_{n}$.
For $\mathfrak{pq}(n)$ and $\mathfrak{psq}(n)$, $\mathfrak{h}_{\bar{0}}^{*}$
and $\varepsilon_{1},\ldots,\varepsilon_{n}$ denote their images
under the natural projection. 

The set of roots is $\Phi=\left\{ \varepsilon_{i}-\varepsilon_{j}\ :\ i\ne j\right\} $
where for each root there is an odd root vector and an even root vector.
We choose the standard set of positive roots:
\[
\Phi_{\bar{0}}^{+}=\Phi_{\bar{1}}^{+}=\left\{ \varepsilon_{i}-\varepsilon_{j}\ :\ i<j\right\} .
\]
The Weyl group $W$ is isomorphic to $S_{n}$. We denote the denominators
by 
\[
R_{\bar{0}}:=\prod_{\alpha\in\Phi_{\bar{0}}^{+}}\left(1-e^{-\alpha}\right),\quad R_{\bar{1}}:=\prod_{\alpha\in\Phi_{\bar{1}}^{+}}\left(1+e^{-\alpha}\right),\quad R=\frac{R_{\bar{0}}}{R_{\bar{1}}}.
\]
Let $\rho_{0}:=(n-1)\varepsilon_{1}+(n-2)\varepsilon_{2}+\ldots+\varepsilon_{n-1}$.
Note that $e^{\rho_{0}}R_{\bar{0}}$ is $W$-anti-invariant and $e^{\rho_{0}}R_{\bar{1}}$
is $W$-invariant. 

\subsection{Finite-dimensional modules and their characters.}

We briefly recall the construction of highest weight $\mathfrak{q}(n)$-modules,
see \cite[Sec. 1.5.4]{CW} for more details. Given $\lambda\in\mathfrak{h}_{\bar{0}}^{*},$
one defines a symmetric bilinear form on $\mathfrak{h}_{\bar{1}}$
by $\left\langle v,w\right\rangle _{\lambda}:=\lambda\left(\left[v,w\right]\right)$.
Let $\mathfrak{h}_{\bar{1}}'\subseteq\mathfrak{h}_{\bar{1}}$ be a
maximal isotropic subspace with respect to $\left\langle \cdot,\cdot\right\rangle _{\lambda}$
and define $\mathfrak{h'}=\mathfrak{h}_{\bar{0}}\oplus\mathfrak{h}_{\bar{1}}'$.
The one dimensional $\mathfrak{h}_{\bar{0}}$-module $\mathbb{C}v_{\lambda}$,
defined by $xv_{\lambda}=\lambda(x)v_{\lambda}$, extends to an $\mathfrak{h}'$-module
by letting $\mathfrak{h}_{\bar{1}}'.v_{\lambda}=0$. Define the induced
$\mathfrak{h}$-module 
\[
W_{\lambda}:=\text{Ind}_{\mathfrak{h}'}^{\mathfrak{h}}\mathbb{C}_{\lambda}.
\]
Then $\dim W_{\lambda}=2^{\left\lfloor \frac{h(\lambda)+1}{2}\right\rfloor }$
where $h(\lambda)$ is the number of nonzero $\lambda_{1},\ldots,\lambda_{n}$.
The following lemma is standard (see for example \cite[Lem. 1.42]{CW}).
\begin{lem}
\label{lem:W_lambda} The $\mathfrak{h}$-module $W_{\lambda}$ is
irreducible. Moreover, every finite-dimensional irreducible $\mathfrak{h}$-module
is isomorphic to $W_{\lambda}$ for some $\lambda\in\mathfrak{h}_{\bar{0}}$. 
\end{lem}

A simple finite-dimensional $\mathfrak{q}(n)$-module $L(\lambda)$
has $W_{\lambda}$ as its highest weight space. Moreover, a highest-weight
module $L(\lambda)$ is finite-dimensional if and only if $\lambda=\sum_{i=1}^{n}\lambda_{i}\varepsilon_{i}$
where $\lambda_{i}-\lambda_{i+1}\in\mathbb{Z}_{+}$ for every $i=1,\ldots,n-1$,
and $\lambda_{i}=\lambda_{i+1}$ implies $\lambda_{i}=0$ (see \cite[Thm. 4]{Pe}
and \cite[Thm 2.18]{CW}). We denote the set of these highest weights
by $\tilde{\Lambda}_{n}$. Any finite-dimensional simple $\mathfrak{q}(n)$-module
is a highest weight module. A $\mathfrak{q}(n)$-module can be integrated
to the Lie supergroup $Q(n)$ if and only if $\lambda\in\tilde{\Lambda}_{n}\cap\mathbb{Z}^{n}$.
Let $\Lambda_{n}:=\tilde{\Lambda}_{n}\cap\mathbb{Z}^{n}$. 

Given a finite-dimensional $\mathfrak{q}(n)$-module $M$ with weight
space decomposition $M=\oplus_{\lambda\in\mathfrak{h}_{\bar{0}}}M_{\lambda}$
where 
\[
M_{\lambda}=\left\{ v\in M\mid xv=\lambda(x)v\text{ for all }x\in\mathfrak{h}_{\bar{0}}\right\} ,
\]
we define the character of $M$ to be
\[
\text{ch}M:=\sum_{\lambda\in\mathfrak{h}_{\bar{0}}}\left(\dim M_{\lambda}\right)e^{\lambda}.
\]
For a Lie superalgebra $\mathfrak{g}$ (resp. Lie supergroup $G$),
we denote by $J(\mathfrak{g})$ (resp. $J(G)$) the ring of all characters
of finite-dimensional $\mathfrak{g}$-modules (resp. $G$-modules).
We shall use the notation $x_{i}:=e^{\varepsilon_{i}}$, $i=1,\ldots,n$. 

An element in the ring of characters is also called a \emph{virtual
character}. Note that the ring of characters is isomorphic to the
quotient of Grothendieck ring by the ideal generated by $[M]-[\Pi M]$
where $\Pi$ is the parity shift functor.

\subsection{The Euler characteristic $E(\lambda)$.}

Let $\lambda\in\Lambda_{n}$. Note that $W_{\lambda}$ is a module
over the Cartan subgroup $H$ that can be extended naturally to the
Borel subgroup $B$ corresponding to $\Phi^{+}$. We have the higher
cohomology modules
\[
H^{i}(\lambda):=H^{i}(G/P,\mathcal{L}(W_{\lambda})).
\]
Here $P$ is the largest parabolic subgroup of $G$ to which the $B$-module
$W_{\lambda}$ can be lifted, and $\mathcal{L}(W_{\lambda})$ denotes
the $G$-equivariant $\mathcal{O}_{G/P}$-module induced by the $P$-module
$W_{\lambda}$. Since $H^{i}(\lambda$) is finite dimensional and
is zero for large enough $i$, the Euler characteristic 
\[
E(\lambda):=\sum_{i\ge0}(-1)^{i}\left[H^{i}(\lambda)\right]
\]
belongs to $J(\mathfrak{q}(n))$.

One has 
\[
E(\lambda)=2^{\left[\frac{h(\lambda)+1}{2}\right]}R^{-1}\cdot\sum_{w\in W}(-1)^{l(w)}w\left(\frac{e^{\lambda}}{\prod_{\beta\in\Phi^{+}(\lambda)}\left(1+e^{-\beta}\right)}\right).
\]

Recall the definition of Schur's $P$-function $p_{\lambda}$ for
$\lambda\in\Lambda_{n}$:

\begin{equation}
p_{\lambda,n}=\sum_{w\in S_{n}/S_{\lambda}}w\left(x^{\lambda}\prod_{\stackrel{1\le i<j\le n}{\lambda_{i}>\lambda_{j}}}\frac{1+x_{i}^{-1}x_{j}}{1-x_{i}^{-1}x_{j}}\right)\label{eq:CK definition of p_lambda}
\end{equation}
where $S_{n}/S_{\lambda}$ denotes the set of minimal length coset
representatives for the stabilizer of $\lambda$ in $S_{n}$. Then
by \cite[Prop. 1]{PS}, $E(\lambda)=2^{\left[\frac{h(\lambda)+1}{2}\right]}\cdot p_{\lambda}$.
By \cite[Cor. 4.26]{B}, $\left\{ E(\lambda)\mid\lambda\in\Lambda_{n}\right\} $
forms a basis to the character ring.

\section{The ring of characters for queer Lie supergroups.}

In this section we describe the ring of characters $Q(n)$ and deduce
Theorem \ref{main theorem} as well as the description for the ring
of character of $SQ(n)$.
\begin{prop}
\label{prop:ring-over-Z}The ring of characters of $Q(n)$ is isomorphic
to the ring 
\begin{equation}
J_{n}=\left\{ f\in\left(\bigoplus_{\lambda\in\mathbb{Z}^{n}}2^{\left\lfloor \frac{h(\lambda)+1}{2}\right\rfloor }\mathbb{Z}x^{\lambda}\right)^{S_{n}}:f\mid_{x_{1}=-x_{2}=t}\text{ is independent in }t\right\} \label{eq: J_n def}
\end{equation}
 where $x^{\lambda}:=x_{1}^{\lambda_{1}}\cdots x_{n}^{\lambda_{n}}$
for $\lambda\in(\lambda_{1},\ldots,\lambda_{n})$ and $h(\lambda)$
is the number of nonzero $\lambda_{i}$'s. 
\end{prop}

The following lemma shows that the ring of characters of $Q(n)$ and
$SQ(n)$ are contained in $J_{n}$ (see (\ref{eq: J_n def})). The
proof is similar to \cite[Prop. 4.3]{SV} except one replaces supercharacters
by characters. 
\begin{lem}
\label{lem:characters are supersymmetric}Let $M$ be a module over
$Q(n)$ or $SQ(n)$ then $\text{ch}M$ is in $J_{n}$.
\end{lem}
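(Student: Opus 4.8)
The plan is to mimic the strategy of \cite[Prop. 4.3]{SV}, reducing the invariance condition to a statement about a rank-one (or rather rank-two) subalgebra. First, $\text{ch}\,M$ is manifestly $W$-invariant, since the weight multiplicities $\dim M_\lambda$ are constant on $W$-orbits (the even part acts, and $W\cong S_n$ permutes the $\varepsilon_i$), and since $M$ is a $Q(n)$- or $SQ(n)$-module all weights lie in $\mathbb{Z}^n$, so $\text{ch}\,M\in\mathbb{Q}[x_1^{\pm1},\ldots,x_n^{\pm1}]^{S_n}$. It therefore remains only to check the evaluation condition: that $\text{ch}\,M\mid_{x_1=-x_2=t}$ does not depend on $t$.

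To see this, I would restrict $M$ to a suitable rank-two subalgebra. Inside $\mathfrak{q}(n)$ sits a copy of $\mathfrak{q}(2)$ acting on the span of $\{\varepsilon_1,\varepsilon_2\}$ (and commuting with the torus generated by $H_3,\ldots,H_n$). Decompose $M$ as a direct sum of finite-dimensional $\mathfrak{q}(2)$-modules; specializing $x_3,\ldots,x_n$ to fixed scalars, $\text{ch}\,M\mid_{x_1=-x_2=t}$ becomes a finite sum of specializations $\text{ch}_{\mathfrak{q}(2)}N\mid_{x_1=-x_2=t}$ for various finite-dimensional $\mathfrak{q}(2)$-modules $N$. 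So it suffices to prove the claim for $n=2$: that for any finite-dimensional $\mathfrak{q}(2)$- (or $\mathfrak{sq}(2)$-) module $N$, the specialization $\text{ch}\,N\mid_{x_1=-x_2=t}$ is constant in $t$. By complete-reducibility–free devissage (characters are additive on short exact sequences), we may further assume $N=L(\mu)$ is simple, i.e. $\mu\in\Lambda_2$. Here is where the queer structure enters: the Cartan $\mathfrak{h}_{\bar1}$ is nonzero, and on a weight space $N_\nu$ with $\nu=a\varepsilon_1+b\varepsilon_2$ the odd Cartan element pairs via $\langle v,w\rangle_\nu=\nu([v,w])$; when we impose $x_1=-x_2$, i.e. $\nu\mapsto a-b$ up to sign, the contributions from weights $\nu$ and their images under the odd action pair up and cancel, leaving only a $t$-independent remainder. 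Concretely, one can use the known character formula for $L(\mu)$ over $\mathfrak{q}(2)$ — equivalently the fact from the previous subsection that $E(\mu)$ is proportional to Schur's $P$-function $p_{\mu,2}$, and Schur's $P$-functions visibly satisfy this evaluation property — together with the fact that $\{E(\mu)\}$ spans.

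The main obstacle is the rank-two computation itself: one must show cleanly that for every $\mu=\mu_1\varepsilon_1+\mu_2\varepsilon_2\in\Lambda_2$ the Laurent polynomial $\text{ch}\,L(\mu)(x_1,x_2)$, after setting $x_1=-x_2=t$, collapses to a constant. I expect the slick route is to avoid handling $L(\mu)$ directly and instead argue on the level of the basis $\{E(\mu)\}$: express $\text{ch}\,L(\mu)$ (a genuine character, hence in $J(\mathfrak q(n))$) in that basis, and verify the evaluation condition for each $E(\mu)$ using the formula
\[
E(\lambda)=2^{\left[\frac{l(\lambda)}{2}\right]}R^{-1}\cdot\sum_{w\in W}(-1)^{l(w)}w\!\left(\frac{e^{\lambda}}{\prod_{\beta\in\Phi^{+}(\lambda)}\left(1+e^{-\beta}\right)}\right),
\]
where the pole structure of $R^{-1}$ along $x_1=-x_2$ is exactly cancelled by a corresponding factor $(1+x_1^{-1}x_2)$ in the numerator, producing a removable singularity whose value is $t$-independent. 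Checking this cancellation — tracking which $\beta\in\Phi^+(\lambda)$ appear and matching them against the factors of $R_{\bar1}$ — is the one genuinely technical point; everything else is the soft reduction described above.
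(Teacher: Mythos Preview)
Your reduction-to-rank-two strategy is sound for $Q(n)$, but the paper takes a cleaner and shorter route. Instead of restricting to the full $\mathfrak{q}(2)$ in the first two coordinates, the paper restricts to the much smaller subalgebra
\[
\mathfrak{g}_{12}:=\mathfrak{h}_{\bar 0}\oplus\mathfrak{g}_{\bar 1,\varepsilon_1-\varepsilon_2}\oplus\mathfrak{g}_{\bar 1,\varepsilon_2-\varepsilon_1},
\]
i.e.\ the even Cartan together with just the two \emph{odd} root vectors for $\pm(\varepsilon_1-\varepsilon_2)$. Over $\mathfrak{g}_{12}$ the simple composition factors are either two-dimensional with character $e^{\lambda}+e^{\lambda-(\varepsilon_1-\varepsilon_2)}=x^{\lambda}(1+x_2/x_1)$, which vanishes at $x_1=-x_2$, or one-dimensional with weight $\mu$ satisfying $\mu_1+\mu_2=0$ (forced by $[\,e_{\bar 1},f_{\bar 1}\,]\in\mathbb{C}(H_1+H_2)$), whose character evaluates to a constant in $t$. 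No appeal to Schur $P$-functions, to Brundan's spanning result, or to any character formula is needed. Your route works, but it proves a two-line lemma by invoking the heaviest tool in the paper.

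There is also a genuine gap in your treatment of $SQ(n)$. The copy of $\mathfrak{q}(2)$ in the first two coordinates is \emph{not} contained in $\mathfrak{sq}(n)$, since its odd Cartan elements have nonzero $\operatorname{tr}B$; only $\mathfrak{sq}(2)$ sits inside $\mathfrak{sq}(n)$. So your restriction argument does not apply as stated, and you would need a separate analysis of $\mathfrak{sq}(2)$-modules. By contrast, $\mathfrak{g}_{12}\subset\mathfrak{sq}(n)$ automatically (off-diagonal odd elements are traceless), so the paper's argument covers both cases at once. Finally, a small slip: $R^{-1}=R_{\bar 1}/R_{\bar 0}$ has a \emph{zero} at $x_1=-x_2$ (from the factor $1+x_2/x_1$ in $R_{\bar 1}$), not a pole; the potential poles come from the denominators $\prod_{\beta\in\Phi^+(\lambda)}(1+e^{-\beta})$ inside the Weyl sum, and it is these that must be shown to be cancelled.
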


\begin{proof}
The character of $M$ is in $\bigoplus_{\lambda\in\mathbb{Z}^{n}}2^{\left\lfloor \frac{h(\lambda)+1}{2}\right\rfloor }\mathbb{Z}x^{\lambda}$
by Lemma \ref{lem:W_lambda}, and the $S_{n}$-symmetry follows from
restricting the representation of $f$ to the even part which is either
$GL(n)$ or $SL(n)$. 

For the evaluation property, restrict $M$ to $\mathfrak{g}_{12}:=\mathfrak{h}_{\bar{0}}\oplus\mathfrak{g}_{\bar{1},\varepsilon_{1}-\varepsilon_{2}}\oplus\mathfrak{g}_{\bar{1},\varepsilon_{2}-\varepsilon_{1}}$.
Then $M$ has composition factors of the form $e^{\lambda}+e^{\lambda-(\varepsilon_{1}-\varepsilon_{2})}$
(typical case) or $e^{\mu}$ where $(\mu,\varepsilon_{1}+\varepsilon_{2})=0$
(atypical case). The former is equal to $x_{1}^{\lambda_{1}}\cdots x_{n}^{\lambda_{n}}\left(1+\frac{x_{2}}{x_{1}}\right)$
and the evaluation $x_{1}=-x_{2}=t$ equals zero. The latter is of
the form $e^{\mu_{1}\left(\varepsilon_{1}+\varepsilon_{2}\right)+\mu_{3}\varepsilon_{3}+\ldots+\mu_{n}\varepsilon_{n}}$
and the evaluation gives equals to $-e^{\mu_{3}\varepsilon_{3}+\ldots+\mu_{n}\varepsilon_{n}}$. 
\end{proof}
Note that since an element $f\in J_{n}$ is a $W$-invariant, the
evaluation $f\mid_{x_{n-1}=-x_{n}=t}$ is also independent of $t$.
Set $J_{0}=\mathbb{Z}$.
\begin{defn}
The evaluation map $\text{ev}:J_{n}\rightarrow J_{n-2}$ is defined
by $\text{ev}(f)=f\mid_{x_{n-1}=-x_{n}}$. 
\end{defn}

\subsection{The kernel of the evaluation map.}

The following proposition is a generalization of the fact that the
kernel of the evaluation map of $GL(m|n)$ is spanned by characters
of Kac modules \cite[Thm. 17]{HR}.
\begin{prop}
\label{kernel} The kernel of the evaluation map is in $J\left(Q(n)\right)$. 
\end{prop}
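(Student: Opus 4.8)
The plan is to produce, for every element $g \in J_{n-2}$, an explicit preimage under $\text{ev}$ that lies in $J(Q(n))$, and separately to exhibit explicit characters in $J(Q(n))$ that span $\ker(\text{ev})$. Since $\text{ev}: J_n \to J_{n-2}$ is surjective (it has an obvious section coming from viewing Laurent polynomials in $x_1,\dots,x_{n-2}$ inside those in $x_1,\dots,x_n$, though one must check the section lands in $J_n$), and since $J(Q(n)) \subseteq J_n$ by Lemma \ref{lem:characters are supersymmetric}, it suffices to show two things: first that $\text{ev}$ maps $J(Q(n))$ onto $J(Q(n-2))$, and second that $\ker(\text{ev}) \cap J_n \subseteq J(Q(n))$. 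The first is a restriction/induction compatibility statement: restricting a $Q(n)$-module to the subalgebra $\mathfrak{g}_{12}$-complement analogous to the $GL(m|n)$ story, or more cleanly using that the evaluation on characters corresponds to a Lie-superalgebra embedding $\mathfrak{q}(n-2) \hookrightarrow \mathfrak{q}(n)$ under which $x_{n-1}=-x_n$ is the natural restriction of weights — this should follow by the same bookkeeping as in Lemma \ref{lem:characters are supersymmetric}.

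The heart of the matter is constructing enough characters in the kernel. Here I would use the Euler characteristics $E(\lambda)$ and the Schur $P$-functions $p_{\lambda,n}$. The key computation is to evaluate $E(\lambda)\mid_{x_{n-1}=-x_n}$ (equivalently $p_{\lambda,n}\mid_{x_{n-1}=-x_n}$) for $\lambda \in \Lambda_n$. From the product formula \eqref{eq:CK definition of p_lambda}, the factor $\dfrac{1+x_{n-1}^{-1}x_n}{1-x_{n-1}^{-1}x_n}$ appears precisely when $\lambda_{n-1} > \lambda_n$, and specializing $x_n = -x_{n-1}$ makes the numerator $1 + x_{n-1}^{-1}x_n$ vanish; the $W$-orbit sum then collapses, showing $p_{\lambda,n}$ evaluates to $0$ when $\lambda_{n-1} > \lambda_n$, and otherwise (when $\lambda_{n-1}=\lambda_n$, forcing $\lambda_{n-1}=\lambda_n=0$) it evaluates to $\pm p_{\mu,n-2}$ for the truncated weight $\mu = (\lambda_1,\dots,\lambda_{n-2})$. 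This makes the $E(\lambda)$ into a triangular-in-$\text{ev}$ basis: those with $\lambda_{n-1}>\lambda_n$ lie in $\ker(\text{ev})$, the rest map (up to scalar) onto the basis $\{E(\mu) : \mu \in \Lambda_{n-2}\}$ of $J(Q(n-2))$. Since these $E(\lambda)$ already lie in $J(Q(n))$, this simultaneously proves surjectivity of $\text{ev}$ onto $J(Q(n-2))$ and that $\ker(\text{ev}\mid_{J(Q(n))})$ is spanned by the $E(\lambda)$ with $\lambda_{n-1}>\lambda_n$.

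To finish: a dimension/rank count shows that the span of $\{E(\lambda) : \lambda_{n-1} > \lambda_n\}$ is all of $\ker(\text{ev}) \cap J_n$. One way is to note that $J_n$ itself (as opposed to $J(Q(n))$) fits into a short exact sequence $0 \to \ker(\text{ev}) \to J_n \xrightarrow{\text{ev}} J_{n-2} \to 0$; if inductively $J(Q(n-2)) = J_{n-2}$ — which is what the paper is ultimately proving, so this will be an induction on $n$ — then $\ker(\text{ev}) \cap J(Q(n)) \to J(Q(n-2)) = J_{n-2}$ surjects and the kernel of that restricted map is $\ker(\text{ev})$, giving $\ker(\text{ev}) \subseteq J(Q(n))$. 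To keep this proposition self-contained I would instead argue directly: an element of $\ker(\text{ev}) \cap J_n$ is a $W$-invariant Laurent polynomial divisible (after multiplying by the appropriate $W$-anti-invariant denominator) by the factors $x_i + x_j$ over pairs $i,j$ involved in the vanishing, and such an element expands in the $p_{\lambda,n}$ with $\lambda_{n-1}>\lambda_n$ — this is essentially the statement that Schur $P$-functions with a strict last part form a basis of the kernel, the direct analogue of the Kac-module computation of \cite[Thm. 17]{HR} that the proposition is advertised to generalize.

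The main obstacle I anticipate is the precise evaluation $p_{\lambda,n}\mid_{x_{n-1}=-x_n}$: one must track which minimal coset representatives in $S_n/S_\lambda$ survive the specialization, handle the poles coming from the $\dfrac{1}{1-x_i^{-1}x_j}$ factors carefully (they are spurious — the full symmetrized sum is a polynomial), and correctly identify the surviving terms with $p_{\mu,n-2}$ up to the power-of-$2$ normalization appearing in the formula for $E(\lambda)$. Getting the scalar exactly right, and confirming that the "atypical" truncation $\mu$ again lies in $\Lambda_{n-2}$, is the one genuinely delicate calculation; everything else is the triangularity-plus-induction packaging described above.
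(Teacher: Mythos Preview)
Most of your proposal addresses the wrong proposition. The bulk of your plan --- evaluating $p_{\lambda,n}$ at $x_{n-1}=-x_n$, tracking which coset representatives survive, identifying the result with $p_{\mu,n-2}$ --- is exactly the content of the surjectivity statement (Proposition~\ref{prop:surjectivity integer case}), not Proposition~\ref{kernel}. Knowing that certain $E(\lambda)$ lie in $\ker(\text{ev})$ and map onto a basis downstairs tells you nothing, by itself, about whether \emph{every} element of $\ker(\text{ev})$ is a virtual character; and your inductive option (B1) is, as you note, circular in the paper's overall architecture.

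The one paragraph that actually targets the proposition is your last ``direct'' sketch: $f\in\ker(\text{ev})$ is $W$-invariant and divisible by each $x_i+x_j$, hence by $\prod_{i\ne j}(x_i+x_j)$. That is correct and is precisely how the paper begins. But you then assert that ``such an element expands in the $p_{\lambda,n}$ with $\lambda_{n-1}>\lambda_n$'' --- this is a restatement of what you want, not an argument, and a graded ``dimension/rank count'' would itself require an independent description of $\ker(\text{ev})$. The paper closes the gap with a trick you did not identify: write the quotient $g=f/\prod_{i\ne j}(x_i+x_j)$ in the basis of ordinary Schur Laurent polynomials $s_\lambda$ (not Schur $P$-functions), and then check directly from the formulas that
\[
\prod_{i\ne j}(x_i+x_j)\cdot s_\lambda \;=\; \prod_{i\ne j}\bigl(1+e^{-(\varepsilon_i-\varepsilon_j)}\bigr)\cdot R_{\bar 0}^{-1}\sum_{w\in W}(-1)^{l(w)}w\bigl(e^{\lambda+\rho_0}\bigr)\;=\;E(\lambda+\rho_0),
\]
the last equality because $\lambda+\rho_0$ is strictly decreasing, hence has trivial $W$-stabilizer and lies in $\Lambda_n$. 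This two-line computation is the entire proof; no evaluation of $p_{\lambda,n}$, no induction, and no spanning-set argument is needed.
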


\begin{proof}
Suppose that $\text{ev}(f)=0$ for $f\in J_{n}$. Then $f$ is divisible
by $x_{n-1}+x_{n}$ and by the $W$-invariance, also divisible by
$\prod_{i<j}\left(x_{i}+x_{j}\right)=\prod_{i<j}\left(e^{\varepsilon_{i}}+e^{\varepsilon_{j}}\right)$.
So
\[
f=\prod_{i<j}\left(e^{\varepsilon_{i}}+e^{\varepsilon_{j}}\right)\cdot g.
\]
Since $f$ and $\prod_{i\ne j}\left(e^{\varepsilon_{i}}+e^{\varepsilon_{j}}\right)$
are $W$-invariant, so is $g$ and by theory of symmetric functions,
$g=\sum^{\text{finite}}a_{\lambda}s_{\lambda}$ where $a_{\lambda}\in\mathbb{Z}$
and $s_{\lambda}$ is the Schur Laurent polynomial
\[
s_{\lambda}=e^{-\rho_{0}}R_{\bar{0}}^{-1}\cdot\sum_{w\in W}(-1)^{l(w)}w\left(e^{\lambda+\rho_{0}}\right)
\]
where $\lambda=(\lambda_{1},\ldots,\lambda_{n})$ and $\lambda_{1}\ge\ldots\ge\lambda_{n}$.
Now
\begin{align*}
f & =\sum_{\lambda}^{\text{finite}}a_{\lambda}\prod_{i\ne j}\left(e^{\varepsilon_{i}}+e^{\varepsilon_{j}}\right)e^{-\rho_{0}}R_{\bar{0}}^{-1}\cdot\sum_{w\in W}(-1)^{l(w)}w\left(e^{\lambda+\rho_{0}}\right)\\
 & =\sum_{\lambda}^{\text{finite}}a_{\lambda}\prod_{i\ne j}\left(1+e^{-\left(\varepsilon_{i}-\varepsilon_{j}\right)}\right)R_{\bar{0}}^{-1}\cdot\sum_{w\in W}(-1)^{l(w)}w\left(e^{\lambda+\rho_{0}}\right)\\
 & =\sum_{\lambda}^{\text{finite}}a_{\lambda}p_{\lambda+\rho_{0}}.
\end{align*}
Note that the last equality uses the fact that $\lambda+\rho_{0}$
has a trivial stabilizer in $W$, that is $\lambda+\rho_{0}\in\Lambda_{n}$.
Now, since $f\in J_{n}$, the maximal $\lambda$ for which $a_{\lambda}\ne0$
is divisible by $2^{\left\lfloor \frac{h(\lambda)+1}{2}\right\rfloor }$.
Subtracting $a_{\lambda}p_{\lambda+\rho_{0}}$ from $f$, we can show
that all $a_{\mu}$'s are divisible by $2^{\left\lfloor \frac{h(\mu)+1}{2}\right\rfloor }$.
Thus $f$ is in the $\mathbb{Z}$-span of the $E(\lambda)$'s and
the assertion follows.
\end{proof}

\subsection{Surjectivity of the evaluation map.}
\begin{prop}
\label{prop:surjectivity integer case}The evaluation map $\text{ev}:J(Q(n))\rightarrow J\left(Q(n-2)\right)$
is surjective.
\end{prop}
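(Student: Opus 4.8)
The plan is to realize every character of $Q(n-2)$ as the image under $\mathrm{ev}$ of a character of $Q(n)$, using the basis $\{E(\lambda)\mid\lambda\in\Lambda_{n-2}\}$ of $J(Q(n-2))$ together with the explicit factorized formula for $E(\lambda)$. First I would observe that it suffices to hit each basis element $E(\mu)$ for $\mu\in\Lambda_{n-2}$. Given such a $\mu=(\mu_1\ge\cdots\ge\mu_{n-2})$, the natural candidate in $J_n$ is $E(\lambda)$ for a well-chosen lift $\lambda\in\Lambda_n$; the most economical choice is to append two entries that are ``as small as possible'' subject to $\lambda\in\Lambda_n$ — for instance $\lambda=(\mu_1,\ldots,\mu_{n-2},0,0)$ when $\mu_{n-2}\ge 0$, and otherwise $\lambda = (\mu_1,\ldots,\mu_{n-2},\mu_{n-2}-1,\mu_{n-2}-1)$ or a similar padding that respects the defining condition of $\tilde\Lambda_n$ (strict decrease except at value $0$). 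I would then compute $\mathrm{ev}(E(\lambda))$ directly from
\[
E(\lambda)=2^{\left[\frac{l(\lambda)}{2}\right]}R^{-1}\cdot\sum_{w\in W}(-1)^{l(w)}w\!\left(\frac{e^{\lambda}}{\prod_{\beta\in\Phi^{+}(\lambda)}\left(1+e^{-\beta}\right)}\right),
\]
or equivalently from the Schur $P$-function description, specializing $x_{n-1}=-x_n$.

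The key computational step is to understand how the Weyl-group symmetrization and the denominator $R=R_{\bar0}/R_{\bar1}$ behave under $x_{n-1}\mapsto -x_n$. Under this specialization $R_{\bar1}$ acquires the factor $(1+e^{-(\varepsilon_{n-1}-\varepsilon_n)})\mapsto (1 + (-x_n)\cdot x_n^{-1}) \cdot(\text{something})$ — here one must be careful, since $e^{-(\varepsilon_{n-1}-\varepsilon_n)} = x_n/x_{n-1}\mapsto x_n/(-x_n) = -1$, so that factor of $R_{\bar1}$ vanishes, and correspondingly a factor of $R_{\bar0}$ also vanishes; the point is that these cancel against the vanishing/poles coming from the symmetrized numerator, and the surviving terms are exactly the $w$ in a coset of the parabolic $S_{n-2}\times S_2$ whose action does not mix $\{n-1,n\}$ with the rest. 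Carrying this out should show $\mathrm{ev}(E(\lambda))$ equals $E(\mu)$ up to an explicit nonzero scalar — the scalar being a ratio of the powers of $2$ coming from $l(\lambda)$ versus $l(\mu)$, together with a sign and a contribution from the two appended coordinates. I would then rescale to get exactly $E(\mu)$, or more cleanly argue that the image of $J(Q(n))$ contains $E(\mu)$ up to a unit and hence contains $E(\mu)$ itself since $J(Q(n-2))$ is a $\mathbb{Q}$-span of the $E(\mu)$.

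The main obstacle I anticipate is the bookkeeping at atypical weights: the product $\prod_{\beta\in\Phi^{+}(\lambda)}(1+e^{-\beta})$ runs only over the roots $\beta$ with $(\lambda,\beta)\neq 0$ (equivalently $\Phi^+(\lambda)$ excludes roots killed by $\lambda$), and when I pad $\mu$ with repeated small entries the set $\Phi^+(\lambda)$ changes in a way that interacts subtly with which factors of $R$ survive the specialization, as well as with the floor function $[l(\lambda)/2]$ controlling the power of $2$. One has to check that the chosen padding keeps the combinatorics clean — ideally so that $\Phi^+(\lambda)$ restricted to the first $n-2$ coordinates is precisely $\Phi^+(\mu)$ and the extra roots all contribute factors that either cancel in $R$ or evaluate to explicit constants. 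An alternative route that sidesteps some of this, which I would keep in reserve, is to prove surjectivity at the level of the ambient rings $\mathrm{ev}:J_n\to J_{n-2}$ (which is easy: extend a polynomial by padding and symmetrizing) and then combine Proposition \ref{kernel} with Lemma \ref{lem:characters are supersymmetric} and a dimension/filtration count to deduce $J(Q(n))=J_n$ and $J(Q(n-2))=J_{n-2}$ simultaneously by induction on $n$ — but since the proposition is stated separately, the direct computation with $E(\lambda)$ is the intended argument.
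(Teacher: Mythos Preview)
Your strategy is essentially the paper's: pad $\mu\in\Lambda_{n-2}$ by inserting two zeros at the position that keeps the sequence weakly decreasing (this works uniformly for all $\mu$, including those with negative entries, since the only repeated value is $0$ and hence $\lambda\in\Lambda_n$), and then show that $\mathrm{ev}$ sends the Schur $P$-function $p_{\lambda,n}$ to $p_{\mathring\lambda,n-2}$ exactly. The paper carries out the computation with the $p_\lambda$-formula rather than the $E(\lambda)$-formula, which sidesteps both the power-of-$2$ scalar and the $\Phi^+(\lambda)$ bookkeeping you anticipated: after setting $x_k=-x_{k+1}$ the terms with $w^{-1}(\{k,k+1\})$ landing on unequal $\lambda$-values vanish, and for the surviving $w$ (those fixing $\{k,k+1\}$) the extra factors $\frac{1+x_i^{-1}x_k}{1-x_i^{-1}x_k}\cdot\frac{1+x_i^{-1}x_{k+1}}{1-x_i^{-1}x_{k+1}}$ cancel to $1$, reproducing $p_{\mathring\lambda,n-2}$ on the nose.
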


To prove the proposition, we give for every $p_{\mathring{\lambda},n-2}$
a Schur function $p_{\lambda,n}$ such that $\text{ev}\left(p_{\lambda,n}\right)=p_{\mathring{\lambda},n-2}$.
For the convenience of the reader, we first present the argument on
an example.
\begin{example}
Suppose $\mathring{\lambda}=(3,1)$. Let $\lambda=\left(3,1,0,0\right)$.
Let us show that $\text{ev}\left(p_{\lambda,4}\right)=p_{\mathring{\lambda},2}$.
{\footnotesize{}}If $w\in S_{4}/S_{\lambda}$ is such that the terms
$1+x_{3}^{-1}x_{4}$ or $1+x_{4}^{-1}x_{3}$ appear in $w\left(x^{\lambda}\prod_{\stackrel{1\le i<j\le n}{\lambda_{i}>\lambda_{j}}}\frac{1+x_{i}^{-1}x_{j}}{1-x_{i}^{-1}x_{j}}\right)$,
then the evaluation of the term is zero. This means that for the evaluation
to be nonzero $w^{-1}\left(\left\{ 3,4\right\} \right)=\left\{ 3,4\right\} $,
which in this example means that $w\in S_{2}:=\left\{ 1,(1\ 2)\right\} $.
Then {\footnotesize{}
\begin{align*}
\text{ev}\left(p_{\lambda,4}\right) & =\text{ev}\sum_{w\in S_{4}/S_{\lambda}}w\left(x^{\lambda}\prod_{\stackrel{1\le i<j\le n}{\lambda_{i}>\lambda_{j}}}\frac{1+x_{i}^{-1}x_{j}}{1-x_{i}^{-1}x_{j}}\right)\\
 & =\text{ev}\sum_{w\in S_{2}}w\left(x^{\lambda}\prod_{\stackrel{1\le i<j\le n}{\lambda_{i}>\lambda_{j}}}\frac{1+x_{i}^{-1}x_{j}}{1-x_{i}^{-1}x_{j}}\right)\\
 & =\sum_{w\in S_{2}}w\left(\text{ev}\left(x^{\lambda}\prod_{\stackrel{1\le i<j\le n}{\lambda_{i}>\lambda_{j}}}\frac{1+x_{i}^{-1}x_{j}}{1-x_{i}^{-1}x_{j}}\right)\right)\\
\\
 & =\sum_{w\in S_{2}}w\left(\text{ev}\left(x_{1}^{3}x_{2}\frac{1+x_{1}^{-1}x_{2}}{1-x_{1}^{-1}x_{2}}\cdot\frac{1+x_{1}^{-1}t}{1-x_{1}^{-1}t}\cdot\frac{1-x_{1}^{-1}t}{1+x_{1}^{-1}t}\cdot\frac{1+x_{2}^{-1}t}{1-x_{2}^{-1}t}\cdot\frac{1-x_{2}^{-1}t}{1+x_{2}^{-1}t}\right)\right)=p_{\mathring{\lambda},2}.\\
\end{align*}
}{\footnotesize\par}
\end{example}

\begin{proof}[Proof of Proposition \ref{prop:surjectivity integer case}.]
 For $n=2$, the evaluation map is surjective since $1\in J(\mathfrak{q}(2))$.
Suppose $n\ge3$. Let $\mathring{\lambda}\in\mathbb{Z}_{+}^{n-2}$
and extend it to $\lambda\in\mathbb{Z}_{+}^{n}$ by adding zeros,
namely $\left\{ \mathring{\lambda}_{1},\ldots,\mathring{\lambda}_{n-2}\right\} \cup\left\{ 0\right\} =\left\{ \lambda_{1},\ldots,\lambda_{n}\right\} $.
We show that $\text{ev}\left(E(\lambda)\right)=E(\mathring{\lambda})$.
Since $E(\lambda)$, $\lambda\in\Lambda_{n-2}$ span $J(Q(n-2))$
over $\mathbb{Z}$, this implies surjectivity. 

Let $k\in\left\{ 1,\ldots,n-1\right\} $ be such that $\lambda_{k}=\lambda_{k+1}=0$
(such $k$ may not be unique). Denote $\sigma_{k}=(k\:n-1)(k+1\:n)\in S_{n}$
and let $\text{ev}_{k}$ be a map on $J_{n}$ such that $\text{ev}_{k}(f)=f\mid_{x_{k}=-x_{k+1}}$.
Then by symmetry $\text{ev}=\sigma_{k}\circ\text{ev}_{k}$. Now, if
$w\in S_{n}/S_{\lambda}$ is such that $w^{-1}\left(\left\{ k,k+1\right\} \right)\subseteq\left\{ \left\{ i,j\right\} \,\mid\,\lambda_{i}\ne\lambda_{j}\right\} $
then 
\[
\text{ev}_{k}\left(w\left(x^{\lambda}\prod_{\stackrel{1\le i<j\le n}{\lambda_{i}>\lambda_{j}}}\frac{1+x_{i}^{-1}x_{j}}{1-x_{i}^{-1}x_{j}}\right)\right)=0.
\]
Denote by $A$ the set of $w\in S_{n}/S_{\lambda}$ for which $w^{-1}\left(\left\{ k,k+1\right\} \right)\subseteq\left\{ i,j\,\mid\,\lambda_{i}=\lambda_{j}\right\} $.
By the definition of $\Lambda^{n}$, $\lambda_{i}=\lambda_{j}$ only
if $\lambda_{i}=0$. Then, up to $S_{\lambda},$ $w^{-1}\left(\left\{ k,k+1\right\} \right)=\left\{ k,k+1\right\} $.
Hence, $w$ preserves $\left\{ k,k+1\right\} $ and consequently $w$
preserves $\left\{ 1,\ldots,n\right\} /\left\{ k,k+1\right\} $. In
particular, $w$ commutes with $\text{ev}_{k}$ and $A$ is the permutation
group of $\left\{ 1,\ldots,n\right\} /\left\{ k,k+1\right\} $. Thus{\footnotesize{}
\begin{align*}
\text{ev}\left(p_{\lambda}\right) & =\sigma_{k}\circ\text{ev}_{k}\left(p_{\lambda,n}\right)\\
 & =\sigma_{k}\circ\text{ev}_{k}\sum_{w\in A}w\left(x^{\lambda}\prod_{\stackrel{1\le i<j\le n}{\lambda_{i}>\lambda_{j}}}\frac{1+x_{i}^{-1}x_{j}}{1-x_{i}^{-1}x_{j}}\right)\\
 & =\sigma_{k}\sum_{w\in A}w\left(\text{ev}_{k}\left(x^{\lambda}\prod_{\stackrel{1\le i<j\le n}{\lambda_{i}>\lambda_{j}}}\frac{1+x_{i}^{-1}x_{j}}{1-x_{i}^{-1}x_{j}}\right)\right)\\
 & =\sigma_{k}\sum_{w\in A}w\left(\text{ev}_{k}\left(x^{\lambda}\prod_{\stackrel{i<j,\ i,j\notin\left\{ k,k+1\right\} }{\lambda_{i}>\lambda_{j}}}\frac{1+x_{i}^{-1}x_{j}}{1-x_{i}^{-1}x_{j}}\cdot\prod_{\stackrel{i<k}{\lambda_{i}\ne0}}\frac{1+x_{i}^{-1}x_{k}}{1-x_{i}^{-1}x_{k}}\cdot\frac{1+x_{i}^{-1}x_{k+1}}{1-x_{i}^{-1}x_{k+1}}\cdot\prod_{\stackrel{i>k}{\lambda_{i}\ne0}}\frac{1+x_{k}^{-1}x_{i}}{1-x_{k}^{-1}x_{i}}\cdot\frac{1+x_{k+1}^{-1}x_{i}}{1-x_{k+1}^{-1}x_{i}}\right)\right)\\
 & =\sigma_{k}\sum_{w\in A}w\left(x^{\mathring{\lambda}}\prod_{\stackrel{i<j,\ i,j\notin\left\{ k,k+1\right\} }{\lambda_{i}>\lambda_{j}}}\frac{1+x_{i}^{-1}x_{j}}{1-x_{i}^{-1}x_{j}}\right)\\
 & =\sum_{w\in S_{n-2}/S_{\mathring{\lambda}}}w\left(x^{\mathring{\lambda}}\prod_{\stackrel{1\le i<j\le n-2}{\lambda_{i}>\lambda_{j}}}\frac{1+x_{i}^{-1}x_{j}}{1-x_{i}^{-1}x_{j}}\right)=p_{\mathring{\lambda},n-2}
\end{align*}
}Since $h(\lambda)=h(\mathring{\lambda})$, we obtain the assertion. 
\end{proof}

\subsection{The ring of characters.}

We can now prove the main theorem.
\begin{proof}[Proof of Theorem \ref{main theorem}]
 We prove the theorem by induction on $n$. For $n=0$, $J_{n}=\mathbb{Z}$
by definition. For $n=1$, $Q(1)$ admits a $1$-dimensional representation
whose character is a scalar multiple of $e^{m\varepsilon_{1}}$ for
every $m\in\mathbb{Z}$. Then $J\left(Q(1)\right)=\mathbb{Z}\left[x_{1}^{\pm1}\right]=J_{1}$. 

Suppose that $J(Q(n-2))=J_{n-2}$. Recall the evaluation map $\text{ev}:J_{n}\rightarrow J_{n-2}$
and that $J(Q(n))\subseteq J_{n}$. By Proposition \ref{prop:surjectivity integer case},
$ev\left(J(Q(n))\right)=J(Q(n-2))=J_{n-2}$. So, any element in $J_{n}$
is a sum of an element from $J(Q(n))$ and an element in the kernel
of $\text{ev}$. By Proposition \ref{kernel}, the kernel is also
contained in $J(Q(n))$ and the assertion is obtained. 
\end{proof}
\begin{cor}
\label{cor:SQ(n)}The ring of characters of $SQ(n)$ is also isomorphic
to $J_{n}$. Indeed, since $SQ(n)$ is a subgroup of $Q(n)$, every
$Q(n)$-module restricts to $SQ(n)$ and so $J(SQ(n))\supseteq J(Q(n))=J_{n}$.
On the other hand, by Lemma \ref{lem:characters are supersymmetric},
$J(SQ(n))\subseteq J_{n}$ and we obtained the equality. In fact,
the restriction of simple $Q(n)$-modules to $SQ(n)$ is known by
\cite[Cor. 4.6]{GS}.
\end{cor}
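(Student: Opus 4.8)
The plan is to obtain the equality $J(SQ(n))=J_{n}$ from two inclusions, both of which are essentially already in hand, so that no new machinery is needed. For the inclusion $J_{n}\subseteq J(SQ(n))$, I would use that $SQ(n)$ is a closed subgroup of $Q(n)$: the restriction of a finite-dimensional $Q(n)$-module to $SQ(n)$ is again finite-dimensional, and since the even Cartan subalgebra $\mathfrak{h}_{\bar{0}}$ is common to $\mathfrak{sq}(n)$ and $\mathfrak{q}(n)$, the weight-space decomposition, and hence the character, is unchanged under this restriction. Thus $J(Q(n))\subseteq J(SQ(n))$, and Theorem \ref{main theorem} identifies the left-hand side with $J_{n}$.

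For the reverse inclusion $J(SQ(n))\subseteq J_{n}$, I would simply invoke Lemma \ref{lem:characters are supersymmetric}, which was stated and proved for $SQ(n)$ as well as for $Q(n)$: the character of any finite-dimensional $SQ(n)$-module is $S_{n}$-invariant (by restriction to the even part $SL(n)$) and satisfies the evaluation condition at $x_{1}=-x_{2}=t$ (by restriction to $\mathfrak{g}_{12}=\mathfrak{h}_{\bar{0}}\oplus\mathfrak{g}_{\bar{1},\varepsilon_{1}-\varepsilon_{2}}\oplus\mathfrak{g}_{\bar{1},\varepsilon_{2}-\varepsilon_{1}}$, which is contained in $\mathfrak{sq}(n)$). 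Combining the two inclusions yields $J(SQ(n))=J_{n}$.

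The only point requiring a word of care — and it is genuinely routine — is the assertion that restriction preserves characters verbatim; this holds precisely because $SQ(n)$ and $Q(n)$ share the same maximal torus, so I do not expect any real obstacle here. If one wants the sharper statement at the level of modules rather than of characters, one can cite \cite[Cor. 4.6]{GS}, according to which a simple $Q(n)$-module restricts to $SQ(n)$ either as a simple module or as a direct sum of two non-isomorphic simples; but this refinement is not needed for the isomorphism of character rings, and I would mention it only as a remark.
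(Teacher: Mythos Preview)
Your proposal is correct and matches the paper's own argument essentially verbatim: both inclusions come from restriction of $Q(n)$-modules (together with Theorem~\ref{main theorem}) and from Lemma~\ref{lem:characters are supersymmetric}, respectively, with the reference to \cite[Cor.~4.6]{GS} appearing only as an ancillary remark. There is nothing to add.
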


\begin{cor}
\label{cor:PQ and PSQ}The rings of characters of $PQ(n)$ and $PSQ(n)$
are equal to 
\[
J_{n}\cap\text{span}_{\mathbb{Z}}\left\{ x_{1}^{\lambda_{1}}\cdots x_{n}^{\lambda_{n}}\mid\sum_{i=1}^{n}\lambda_{i}=0\right\} .
\]
Indeed, a module over $Q(n)$ (resp. $SQ(n)$) is lifts to a module
over $PQ(n)$ (resp. $PSQ(n)$) if and only if the identity matrix
acts by zero. This holds if and only if all the weights $\lambda$
of the module satisfy that $\sum_{i=1}^{n}\lambda_{i}=0$. When the
module is simple, this is true if and only if one weight $\lambda$
of the module satisfies that $\sum_{i=1}^{n}\lambda_{i}=0$. 
\end{cor}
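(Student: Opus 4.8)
The plan is to identify the ring of characters of $PQ(n)$ (and similarly $PSQ(n)$) inside $J(Q(n))=J_n$ by tracking which virtual characters descend along the projection $\mathfrak{q}(n)\to\mathfrak{pq}(n)=\mathfrak{q}(n)/\mathbb{C}I$. First I would recall that a $\mathfrak{q}(n)$-module $M$ factors through $\mathfrak{pq}(n)$ precisely when the identity matrix $I$ acts as zero. Since $I=H_1+\cdots+H_n\in\mathfrak{h}_{\bar 0}$, it acts on a weight space $M_\mu$ by the scalar $\sum_{i=1}^n\mu_i$. Hence $I$ acts by zero on all of $M$ if and only if every weight $\mu$ of $M$ lies on the hyperplane $\sum_i\mu_i=0$, which is exactly the condition that $\operatorname{ch}M$ lies in $\operatorname{span}\{x_1^{\lambda_1}\cdots x_n^{\lambda_n}\mid\sum_i\lambda_i=0\}$. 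This gives the inclusion of the character ring of $PQ(n)$ into $J_n\cap\operatorname{span}\{\cdots\}$, and the same argument applies verbatim to $SQ(n)\to PSQ(n)$ using Corollary \ref{cor:SQ(n)}.

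For the reverse inclusion I would argue that the subspace $J_n\cap\operatorname{span}\{x^\lambda\mid\sum\lambda_i=0\}$ is spanned by characters of $Q(n)$-modules that already have $I$ acting by zero, hence by characters of genuine $PQ(n)$-modules. The cleanest route is via the basis $\{E(\lambda)\mid\lambda\in\Lambda_n\}$ of $J(Q(n))$: the Euler characteristic $E(\lambda)$ is (a scalar multiple of) the Schur $P$-function $p_\lambda$, all of whose monomials $x^\mu$ satisfy $\sum_i\mu_i=\sum_i\lambda_i$, since $p_\lambda$ is built from $x^\lambda$ times factors $(1+x_i^{-1}x_j)/(1-x_i^{-1}x_j)$ whose numerator and denominator are homogeneous of the same total degree. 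Therefore each $E(\lambda)$ is homogeneous of total degree $\sum_i\lambda_i$ under the grading by $\sum_i(\deg\text{ in }x_i)$, and the subring $J_n\cap\operatorname{span}\{\cdots\}$ is exactly the span of those $E(\lambda)$ with $\sum_i\lambda_i=0$ — equivalently, those $E(\lambda)$ whose underlying cohomology modules $H^i(\lambda)$ have $I$ acting by zero, hence lift to $PQ(n)$.

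The remaining point to nail down is the passage from simple modules to the weight condition stated in the corollary, namely that for a simple module it suffices to check $\sum_i\lambda_i=0$ for a single weight $\lambda$. This follows because $I$ is central in $\mathfrak{q}(n)$, so it acts by a single scalar on the simple module $L(\lambda)$ by Schur's lemma, and that scalar is $\sum_i\lambda_i$ read off the highest weight; if it vanishes then $I$ acts by zero on the whole module. I expect the main (minor) obstacle to be bookkeeping: making sure the ``total degree'' grading is compatible with the $S_n$-action and with the evaluation condition defining $J_n$, so that intersecting $J_n$ with a graded piece of $\mathbb{Q}[x_1^{\pm1},\ldots,x_n^{\pm1}]^{S_n}$ indeed yields the span of the homogeneous $E(\lambda)$'s — but this is immediate since the $S_n$-action and the evaluation $x_{n-1}=-x_n=t$ both preserve total degree, so $J_n$ is a graded subring and the argument goes through with Theorem \ref{main theorem} supplying $J(Q(n))=J_n$.
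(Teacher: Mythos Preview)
Your proposal is correct and follows essentially the same approach as the paper: both arguments rest on the observation that a $Q(n)$-module factors through $PQ(n)$ exactly when $I=H_1+\cdots+H_n$ acts by zero, i.e.\ when all weights have coordinate sum zero, and that for a simple module Schur's lemma reduces this to checking a single weight. The only cosmetic difference is that for the reverse inclusion you make the grading argument explicit via the homogeneous basis $\{E(\lambda)\}$, whereas the paper leaves it implicit (relying on the fact that simple characters themselves are homogeneous in total degree); both routes amount to intersecting the graded ring $J_n$ with its degree-zero piece.
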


\begin{rem}
The category of finite-dimensional $Q(n)$-modules admits a subcategory
of polynomial representations, namely those which appear as compositions
factors in the tensor algebra of the natural representations. The
Grothendieck ring of this subcategory is the set of all polynomials
in $J_{n}$. This ring is isomorphic to the center of $U(\mathfrak{q}(n))$
and was described in \cite[Thm. 2.11]{Pa} (see also \cite[A.3.4]{CW}).
\end{rem}

\section{The ring of characters for queer Lie superalgebras}

We first describe the ring of characters for the category of finite-dimensional
modules whose weights are half integers. The simple modules in this
category were studied in \cite{CK}.
\begin{prop}
Let $J_{n}$ be as in (\ref{eq: J_n def}) and 
\[
J_{\frac{1}{2},n}=\left(\text{span}_{\mathbb{Z}}2^{\left\lfloor \frac{n+1}{2}\right\rfloor }x_{1}^{k_{1}}\cdots x_{n}^{k_{n}}\ \mid\ k_{1},\ldots,k_{n}\in\frac{1}{2}+\mathbb{Z}\right)^{S_{n}}.
\]
The ring of characters of the category of finite-dimensional half-integer
weights is isomorphic to
\[
J_{n}\oplus\prod_{1\le i<j\le n}\left(x_{i}+x_{j}\right)J_{\frac{1}{2},n}.
\]
\end{prop}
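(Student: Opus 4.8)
The plan is to separate the category into its integer-weight and half-integer-weight parts, treat the first by Theorem~\ref{main theorem}, and reduce the second to classical symmetric-function theory, the crucial point being that \emph{every} half-integer weight in $\tilde{\Lambda}_{n}$ is typical. First, since the roots $\varepsilon_{i}-\varepsilon_{j}$ are integral, all weights of an indecomposable finite-dimensional $\mathfrak{q}(n)$-module lie in one coset of $\mathbb{Z}^{n}$, and by the description of $\tilde{\Lambda}_{n}$ each $\lambda\in\tilde{\Lambda}_{n}$ has all coordinates in a single coset of $\mathbb{Z}$; hence the category of finite-dimensional modules whose weights lie in $\frac{1}{2}\mathbb{Z}^{n}$ is the direct sum of the integer-weight category and the half-integer-weight category. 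It is a tensor category, and its ring of characters is $J(Q(n))\oplus V$, where $V$ denotes the $\mathbb{Q}$-span of $\text{ch}\,L(\lambda)$ over the half-integer $\lambda\in\tilde{\Lambda}_{n}$. By Theorem~\ref{main theorem}, $J(Q(n))=J_{n}$; since monomials in $J_{n}$ have integral while those in $V$ have half-integral exponents, the sum is automatically direct, and it only remains to identify $V$ with $\prod_{1\le i<j\le n}(x_{i}+x_{j})\,J_{\frac{1}{2},n}$.

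Next I would use typicality. If $\lambda\in\tilde{\Lambda}_{n}$ has all $\lambda_{i}\in\frac{1}{2}+\mathbb{Z}$, then the symmetric bilinear form $\langle\cdot,\cdot\rangle_{\lambda}$ on $\mathfrak{h}_{\bar{1}}$ is nondegenerate — its Gram matrix in the basis of odd diagonal matrices is diagonal with entries $2\lambda_{i}\ne0$ — so $\lambda$ is typical; consequently the higher cohomology $H^{i}(\lambda)$ vanishes for $i>0$, so $E(\lambda)$ equals $\text{ch}\,L(\lambda)$ up to a nonzero scalar, and since \cite[Prop. 1]{PS} identifies $E(\lambda)$ with a scalar multiple of $p_{\lambda,n}$ it follows that $\text{ch}\,L(\lambda)$ is a nonzero scalar multiple of $p_{\lambda,n}$ (this is the classical character formula for typical $\mathfrak{q}(n)$-modules, cf.~\cite{Pe}). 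Moreover half-integrality forces $\lambda_{1}>\cdots>\lambda_{n}$, so the stabilizer $S_{\lambda}$ is trivial. Hence $V=\text{span}_{\mathbb{Q}}\left\{ p_{\lambda,n}\ :\ \lambda_{1}>\cdots>\lambda_{n},\ \lambda_{i}\in\frac{1}{2}+\mathbb{Z}\right\}$.

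The heart of the argument is then the identity $p_{\lambda,n}=\prod_{1\le i<j\le n}(x_{i}+x_{j})\cdot s_{\lambda-\rho_{0}}$, valid whenever $\lambda$ has pairwise distinct parts. Since in that case $S_{\lambda}$ is trivial, (\ref{eq:CK definition of p_lambda}) reads $\sum_{w\in S_{n}}w\bigl(x^{\lambda}\prod_{i<j}\tfrac{x_{i}+x_{j}}{x_{i}-x_{j}}\bigr)$; using that $\prod_{i<j}(x_{i}+x_{j})=e^{\rho_{0}}R_{\bar{1}}$ is $W$-invariant whereas $\prod_{i<j}(x_{i}-x_{j})=e^{\rho_{0}}R_{\bar{0}}$ is $W$-anti-invariant, one pulls $\prod_{i<j}(x_{i}+x_{j})$ out of the symmetrization (picking up the sign $(-1)^{l(w)}$ from the anti-invariant denominator) and is left with $\bigl(\sum_{w\in S_{n}}(-1)^{l(w)}w(x^{\lambda})\bigr)/\prod_{i<j}(x_{i}-x_{j})$, which is precisely the Schur Laurent polynomial $s_{\lambda-\rho_{0}}$ from the proof of Proposition~\ref{kernel}. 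As $\lambda$ ranges over the strictly decreasing $n$-tuples with entries in $\frac{1}{2}+\mathbb{Z}$, the tuple $\nu:=\lambda-\rho_{0}$ ranges over all weakly decreasing such tuples, and $\{s_{\nu}\}$ is unitriangular with respect to the monomial symmetric Laurent polynomials $\{m_{\nu}\}$, which manifestly span $J_{\frac{1}{2},n}$ — the lower-order terms $m_{\nu'}$ satisfying $\nu'-\nu\in\mathbb{Z}^{n}$, so $\nu'$ is again half-integral — whence $\{s_{\nu}\}$ is a $\mathbb{Q}$-basis of $J_{\frac{1}{2},n}$. Therefore $V=\prod_{1\le i<j\le n}(x_{i}+x_{j})\,J_{\frac{1}{2},n}$, as required.

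Finally I would verify that the decomposition respects multiplication, so that the isomorphism is one of rings: $J_{n}\cdot J_{n}\subseteq J_{n}$ by Theorem~\ref{main theorem}; $J_{n}\cdot\bigl(\prod_{i<j}(x_{i}+x_{j})\,J_{\frac{1}{2},n}\bigr)\subseteq\prod_{i<j}(x_{i}+x_{j})\,J_{\frac{1}{2},n}$ is immediate; and $\bigl(\prod_{i<j}(x_{i}+x_{j})\,J_{\frac{1}{2},n}\bigr)^{2}=\prod_{i<j}(x_{i}+x_{j})^{2}\,J_{\frac{1}{2},n}^{2}\subseteq\prod_{i<j}(x_{i}+x_{j})^{2}\,\mathbb{Q}[x_{1}^{\pm1},\ldots,x_{n}^{\pm1}]^{S_{n}}$, whose elements are $S_{n}$-invariant and vanish to order $\ge2$ along $x_{1}=-x_{2}$, hence restrict to $0$ on $x_{1}=-x_{2}=t$ and therefore lie in $J_{n}$ — mirroring the fact that a tensor product of two half-integer modules has integral weights. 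The step I expect to be the main obstacle is the half-integer input, that $E(\lambda)$ (equivalently $\text{ch}\,L(\lambda)$) is a scalar multiple of $p_{\lambda,n}$ for half-integer $\lambda$ — the analogue of the facts recalled above for integral $\lambda$ — which is exactly where typicality enters, via the vanishing of higher cohomology or the classical typical character formula; everything past that point is bookkeeping, namely matching the index sets through $\lambda\mapsto\lambda-\rho_{0}$ and checking that half-integrality survives the passage from Schur to monomial symmetric functions.
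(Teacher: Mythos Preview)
Your argument is correct and in fact more complete than the paper's own proof, but it proceeds by a genuinely different route. The paper establishes the inclusion of half-integer characters into $\prod_{i<j}(x_i+x_j)\,J_{\frac{1}{2},n}$ by an elementary $\mathfrak{sl}(2)$-restriction argument: for each even root $\alpha=\varepsilon_i-\varepsilon_j$ one restricts $M$ to the corresponding $\mathfrak{sl}(2)$-triple, observes that since all $\alpha$-weights are half-integers no string can contain $0$, so every string has even length and $\text{ch}\,M$ is a sum of terms $e^{\lambda}(1+e^{-\alpha})$, hence divisible by $x_i+x_j$. This argument applies to \emph{any} module, not only simples, and avoids the typical character formula entirely; on the other hand the paper's proof as written does not address the reverse inclusion, whereas your factorization $p_{\lambda,n}=\prod_{i<j}(x_i+x_j)\cdot s_{\lambda-\rho_0}$ together with the Schur basis of $J_{\frac{1}{2},n}$ gives both containments at once. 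One small inaccuracy: nondegeneracy of $\langle\cdot,\cdot\rangle_{\lambda}$ on $\mathfrak{h}_{\bar 1}$ (i.e.\ $\lambda_i\neq 0$) is not the typicality condition for $\mathfrak{q}(n)$ --- that is $\lambda_i+\lambda_j\neq 0$ for $i\neq j$ --- but for half-integer $\lambda_i$ both hold automatically, so your conclusion that the Penkov formula from \cite{Pe} applies is still valid.
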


\begin{proof}
The elements in $\prod_{1\le i<j\le n}\left(x_{i}+x_{j}\right)J_{\frac{1}{2},n}$
are in the kernel of the evaluation map and so belong to the ring
of characters for the same reasoning as in Proposition \ref{kernel}. 

For the other inclusion, we need to show that the character of of
any module $M$ a half-integer (non-integer) weights is divisible
by $\left(x_{i}+x_{j}\right)$ for every $i\ne j$. Note that $\alpha=\varepsilon_{i}-\varepsilon_{j}$
is also an even root. Restrict $M$ to the corresponding $\mathfrak{sl}(2)$-triple
$\mathfrak{s_{\alpha}}$. Since the weights are not integers, zero
is not a weight and the restriction of $\mathfrak{s}_{\alpha}$ is
a direct sum of $\mathfrak{sl}(2)$-strings of even length. In particular,
as an $\mathfrak{h}_{\bar{0}}$-module, $M=\bigoplus_{\lambda\in B}N_{\lambda}$
where $B$ is a finite subset of $\mathfrak{h}_{\bar{0}}$ and $\text{ch}N_{\lambda}=e^{\lambda}+e^{\lambda-\alpha}=x^{\lambda}+x^{\lambda}x_{i}^{-1}x_{j}=(x_{i}+x_{j})x^{\lambda}x_{i}^{-1}$. 
\end{proof}
Similarly to $J_{\frac{1}{2},n},$ we can define the vector space
\[
J_{a,n}:=\left(\text{span}_{\mathbb{Z}}\left\{ 2^{\left\lfloor \frac{n+1}{2}\right\rfloor }x_{1}^{k_{1}}\cdots x_{n}^{k_{n}}\ \mid\ k_{1},\ldots,k_{n}\in a+\mathbb{Z}\right\} \right)^{S_{n}}
\]
 for any $a\in\mathbb{C}/\mathbb{Z}$. 
\begin{prop}
The ring of characters of finite-dimensional $\mathfrak{q}(n)$-modules
is isomorphic to
\[
J_{n}\oplus\bigoplus_{a\in\mathbb{C}/\mathbb{Z}}\prod_{1\le i<j\le n}\left(x_{i}+x_{j}\right)J_{a,n}.
\]
\end{prop}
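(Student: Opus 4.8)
The plan is to mimic the half-integer case and reduce everything to the structural decomposition of the character lattice by central character congruence classes. First I would observe that the weight lattice of $\mathfrak{q}(n)$-modules decomposes along $\mathbb{C}/\mathbb{Z}$: any finite-dimensional $\mathfrak{q}(n)$-module $M$ splits as $M=\bigoplus_{a\in\mathbb{C}/\mathbb{Z}} M^{(a)}$ where $M^{(a)}$ has all weights in $a\varepsilon_1+\cdots+a\varepsilon_n+(\text{integer lattice})$; here one uses that $\mathfrak{h}_{\bar 0}$ acts semisimply and that the integer-span of the roots preserves each coset, so each $M^{(a)}$ is a submodule. Hence $J(\mathfrak q(n))=\bigoplus_{a}J(\mathfrak q(n))^{(a)}$, where the $a=0$ summand is exactly $J(Q(n))=J_n$ by Theorem \ref{main theorem}, since an integer-weight $\mathfrak q(n)$-module integrates to $Q(n)$.

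Next I would treat a fixed nonzero class $a\in\mathbb{C}/\mathbb{Z}$. The same $\mathfrak{sl}(2)$-string argument from the previous proposition applies verbatim: for any even root $\alpha=\varepsilon_i-\varepsilon_j$, restricting $M^{(a)}$ to the $\mathfrak{sl}(2)$-triple $\mathfrak s_\alpha$ gives, since $0$ is not an $\alpha$-weight, only even-length strings, so $\mathrm{ch}\,M^{(a)}$ is divisible by $x_i+x_j$ for all $i\neq j$. Combined with $S_n$-invariance from restriction to $\mathfrak{gl}(n)$, this shows $J(\mathfrak q(n))^{(a)}\subseteq\prod_{1\le i<j\le n}(x_i+x_j)\,J_{a,n}$. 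For the reverse inclusion I would produce enough characters: the simple modules $L(\lambda)$ with $\lambda\in\tilde\Lambda_n$, $\lambda_i\in a+\mathbb{Z}$ (and all $\lambda_i-\lambda_{i+1}\in\mathbb{Z}_+$, with no equality constraint needed since $\lambda_i\neq 0$ automatically in a nonzero coset) have highest weights that are strictly dominant, so the corresponding Euler characteristics $E(\lambda)$ are scalar multiples of Schur $P$-functions $p_{\lambda,n}$ and, by the same argument as in \cite[Cor. 4.26]{B} and \cite[Prop. 1]{PS}, form a basis of $J(\mathfrak q(n))^{(a)}$; one then checks that $p_{\lambda,n}=\prod_{i<j}\bigl(1+x_i^{-1}x_j\bigr)^{\pm}\cdot(\text{something})$ — more precisely that after factoring $\prod_{i<j}(x_i+x_j)$ out of each $p_{\lambda,n}$ the quotients span $J_{a,n}$, using that in a nonzero coset \emph{every} pair $i<j$ contributes a $\frac{1+x_i^{-1}x_j}{1-x_i^{-1}x_j}$ factor in \eqref{eq:CK definition of p_lambda}.

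A cleaner route for that reverse inclusion is to exhibit an explicit bijection: the map $f\mapsto \prod_{i<j}(x_i+x_j)^{-1} f$ identifies $J(\mathfrak q(n))^{(a)}$ with a subspace of $J_{a,n}$, and since $J_{a,n}$ is a free rank-one module over $\mathbb{Q}[x_1^{\pm1},\dots,x_n^{\pm1}]^{S_n}$ (pick any monomial $x^{a\cdot(1,\dots,1)}$ times the symmetrizing denominator), one matches up dimensions in each degree using the Schur $P$-function basis on one side and the Schur function basis on the other. I expect the main obstacle to be precisely this bookkeeping: showing that dividing the $p_{\lambda,n}$ by $\prod_{i<j}(x_i+x_j)$ yields a spanning set of $J_{a,n}$, i.e. that the factorization of the Schur $P$-function in a generic coset is exactly the full Vandermonde-type product $\prod_{i<j}(x_i+x_j)$ and nothing more or less. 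This is a purely combinatorial statement about \eqref{eq:CK definition of p_lambda} that should follow by clearing the $1-x_i^{-1}x_j$ denominators against the $R_{\bar 0}$ in the Weyl-symmetrization and tracking which $1+x_i^{-1}x_j$ factors survive, but it requires care. Finally I would assemble the three pieces — the $a=0$ summand, the nonzero-$a$ summands, and the direct-sum decomposition of $J(\mathfrak q(n))$ — to conclude the stated isomorphism; the direct sum on the right is automatically internal because distinct cosets live in disjoint monomial supports.
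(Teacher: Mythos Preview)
Your coset decomposition and the identification of the $a=0$ summand with $J_n$ are correct, but the key step for nonzero $a$ does not go through as written. When you restrict $M^{(a)}$ to the even $\mathfrak{sl}(2)$-triple $\mathfrak s_\alpha$ for $\alpha=\varepsilon_i-\varepsilon_j$, the relevant $h_\alpha$-eigenvalue on a weight vector of weight $\mu$ is $\mu_i-\mu_j$, which lies in $\mathbb Z$ for \emph{every} coset $a$ and can certainly vanish (nothing forbids a weight with $\mu_i=\mu_j$). So the assertion ``$0$ is not an $\alpha$-weight'' is unjustified, and you cannot conclude even-length strings this way. Whatever the half-integer argument in the previous proposition is really doing, it genuinely uses that the individual entries $\mu_i$ are non-integral, and that feature does not carry over ``verbatim'' to an arbitrary nonzero coset.

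The paper's proof avoids this entirely by separating off the generic case $a\neq 0,\tfrac12$. There one has $\lambda_i+\lambda_j\in 2a+\mathbb Z$, hence $\lambda_i+\lambda_j\neq 0$ for all $i\neq j$, so every simple $L(\lambda)$ in such a coset is \emph{typical}. Penkov's closed formula \cite[Thm.~2]{Pe} then exhibits $\mathrm{ch}\,L(\lambda)$ as a scalar multiple of $\prod_{i<j}(x_i+x_j)$ times a Schur polynomial, giving both inclusions at once: divisibility is manifest, and as $\lambda$ ranges over the (automatically strictly) dominant weights in the coset the Schur polynomials sweep out all of $J_{a,n}$. Only the single coset $a=\tfrac12$ is then referred back to the previous proposition. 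Your Schur-$P$/Euler-characteristic bookkeeping for the reverse inclusion would in fact collapse to exactly this computation, since for strictly dominant $\lambda$ every pair $i<j$ contributes its factor in \eqref{eq:CK definition of p_lambda} and one finds $p_{\lambda,n}=\prod_{i<j}(x_i+x_j)\cdot s$ for a Schur polynomial $s$; but once you notice typicality the whole thing is immediate and you never need to invoke the basis results of \cite{B} or \cite{PS} outside the integral case.
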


\begin{proof}
Similarly to the previous proposition the elements in $\prod_{1\le i<j\le n}\left(x_{i}+x_{j}\right)J_{a,n}$
belong to the ring of characters for the same reasoning as they are
in the kernel of the evaluation map. 

For the other inclusion, let $L(\lambda)$ be a simple highest weight
module for which $\lambda\in a+\mathbb{Z}^{n}$. Suppose $a\ne\frac{1}{2}$,
then, $\lambda_{i}\ne-\lambda_{j}$ for every $i\ne j$ and $L(\lambda)$
is a typical module. By, \cite[Thm. 2]{Pe} (see also \cite[Thm. 2.5.2.]{CW}),
\[
\text{ch}L(\lambda)=2^{\left\lfloor \frac{n+1}{2}\right\rfloor }R\cdot\sum_{w\in W}(-1)^{l(w)}e^{w(\lambda)}.
\]
In particular $\text{ch}L(\lambda)$ is divisible by $e^{\rho_{0}}\prod_{i<j}\left(1+e^{-\varepsilon_{i}+\varepsilon_{j}}\right)=\prod_{i<j}\left(x_{i}+x_{j}\right)$.
For $a=\frac{1}{2}$, the characters are divisible by $\prod_{i<j}\left(x_{i}+x_{j}\right)$
by the previous proposition and the assertion follows.
\end{proof}
\begin{rem}
Note that $\mathfrak{q}(n)$ and $\mathfrak{sq}(n)$ admit the same
rings of characters by a similar argument as in Corollary \ref{cor:SQ(n)}.
\end{rem}

\begin{cor}
Similarly to Corollary \ref{cor:PQ and PSQ}, 
\begin{align*}
J\left(\mathfrak{pq}(n)\right) & =J\left(\mathfrak{q}(n)\right)\cap\text{span}_{\mathbb{Z}}\left\{ x^{\lambda}\mid\lambda\in\mathbb{Z}^{n},\sum_{i=1}^{n}\lambda_{i}=0\right\} \\
J\left(\mathfrak{spq}(n)\right) & =J\left(\mathfrak{sq}(n)\right)\cap\text{span}_{\mathbb{Z}}\left\{ x^{\lambda}\mid\lambda\in\mathbb{Z}^{n},\sum_{i=1}^{n}\lambda_{i}=0\right\} .
\end{align*}
\end{cor}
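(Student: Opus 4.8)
The plan is to reduce the two identities for $\mathfrak{pq}(n)$ and $\mathfrak{psq}(n)$ to the already-established case of $\mathfrak{q}(n)$ and $\mathfrak{sq}(n)$ by the same device used in Corollary~\ref{cor:PQ and PSQ}, the only new ingredient being that we now range over all weight lattices $a+\mathbb{Z}^n$ rather than just the integral one. Concretely, a finite-dimensional $\mathfrak{q}(n)$-module $M$ descends to $\mathfrak{pq}(n)=\mathfrak{q}(n)/\mathbb{C}I$ precisely when the identity matrix $I$ acts by zero on $M$; since $I=H_1+\cdots+H_n$ lies in $\mathfrak{h}_{\bar 0}$, this happens if and only if every weight $\lambda$ of $M$ satisfies $\sum_i\lambda_i=0$. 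Thus $J(\mathfrak{pq}(n))$ is exactly the span of those $\mathrm{ch}\,M$ for $M$ a $\mathfrak{pq}(n)$-module, which is the intersection of $J(\mathfrak{q}(n))$ with the span of the monomials $x^\lambda$ with $\sum_i\lambda_i=0$. The identical argument with $\mathfrak{sq}(n)$ in place of $\mathfrak{q}(n)$ gives the second line.

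First I would recall, from the preceding proposition, the explicit decomposition
\[
J(\mathfrak{q}(n))=J_n\oplus\bigoplus_{a\in\mathbb{C}/\mathbb{Z}}\ \prod_{1\le i<j\le n}\bigl(x_i+x_j\bigr)\,J_{a,n},
\]
and observe that the grading of $\mathbb{Q}[x_1^{\pm1},\ldots,x_n^{\pm1}]$ by total degree $\sum_i\lambda_i$ is compatible with this direct-sum decomposition: each summand $\prod_{i<j}(x_i+x_j)J_{a,n}$ is a sum of homogeneous pieces, and intersecting with the degree-zero subspace commutes with the direct sum. Then I would note that in the $a=\tfrac12$-summand and more generally whenever $a\notin\tfrac12+\mathbb{Z}$ the typical-character formula forces the relevant characters into $\prod_{i<j}(x_i+x_j)J_{a,n}$, so the degree-zero part of each summand is spanned by genuine characters of $\mathfrak{pq}(n)$-modules — one simply takes the subquotients of the corresponding $\mathfrak{q}(n)$-modules on which $I$ acts trivially. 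The nontrivial direction, that every degree-zero element of $J(\mathfrak{q}(n))$ is a combination of characters of modules killed by $I$, follows because one can always twist a highest-weight $\mathfrak{q}(n)$-module $L(\lambda)$ by a one-dimensional character to shift $\sum_i\lambda_i$; when it is already $0$ the module itself descends, and when the module is simple one weight suffices to detect this, exactly as asserted in Corollary~\ref{cor:PQ and PSQ}.

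The main obstacle, such as it is, is bookkeeping rather than conceptual: one must check that the passage to the quotient algebra does not lose any characters, i.e.\ that the degree-zero part of the span of all $\mathrm{ch}\,L(\lambda)$ is spanned by those $\mathrm{ch}\,L(\lambda)$ with $\sum_i\lambda_i=0$ together with suitable virtual combinations, and that twisting by central characters $e^{c(\varepsilon_1+\cdots+\varepsilon_n)}$ acts transitively on the degree-grading within each coset $a+\mathbb{Z}^n$. Since such a twist is realized by tensoring with a one-dimensional $\mathfrak{q}(n)$-module, this is immediate. I would then finish by invoking the $\mathfrak{sq}(n)$-analogue verbatim, using the remark that $\mathfrak{q}(n)$ and $\mathfrak{sq}(n)$ share the same ring of characters, so that $J(\mathfrak{psq}(n))=J(\mathfrak{sq}(n))\cap\mathrm{span}\{x^\lambda:\lambda\in\mathbb{Z}^n,\ \sum_i\lambda_i=0\}$ drops out with no extra work.
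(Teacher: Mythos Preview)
Your core argument coincides with the paper's: the paper gives no separate proof here beyond pointing back to Corollary~\ref{cor:PQ and PSQ}, and your essential observation is the same one --- a finite-dimensional $\mathfrak{q}(n)$-module descends to $\mathfrak{pq}(n)$ if and only if $I=H_1+\cdots+H_n$ acts by zero, equivalently every weight satisfies $\sum_i\lambda_i=0$, and since each simple character $\mathrm{ch}\,L(\lambda)$ is homogeneous of total degree $\sum_i\lambda_i$, the degree-zero part of $J(\mathfrak{q}(n))$ is exactly spanned by characters of modules on which $I$ acts trivially. That is already the whole argument; everything past your first paragraph is superfluous.

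One step in your elaboration is actually false and should be removed rather than repaired. You claim that the shift by $e^{c(\varepsilon_1+\cdots+\varepsilon_n)}$ is realised by tensoring with a one-dimensional $\mathfrak{q}(n)$-module. No such module exists for $c\ne 0$: on a one-dimensional super vector space every odd operator vanishes, so each $\bar H_i\in\mathfrak{h}_{\bar 1}$ acts by zero, and then $[\bar H_i,\bar H_i]=2H_i$ must act by zero as well, forcing $\lambda_i=0$ for all $i$. The only one-dimensional $\mathfrak{q}(n)$-module is the trivial one. Fortunately you never need this twist: homogeneity of the simple characters already gives the ``nontrivial direction'' directly, without moving anything between degrees. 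Drop the twisting discussion and the paragraph about the summand decomposition, keep your first paragraph, and the proof is complete and matches the paper's.
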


\section{The Weyl Groupoid for $\mathfrak{q}(n)$. }

We describe the polynomial invariants of certain affine action of
the super Weyl groupoid $\mathfrak{W}$ of $\mathfrak{q}(n)$. We
follow \cite[Sec. 9]{SV} (for the periplectic case, see \cite[Sec. 5.4]{IRS}).

We identify $\mathfrak{h}_{0}^{*}$ with its dual using the scalar
product in which $(\varepsilon_{i},\varepsilon_{j})=\delta_{ij}$.
Let $\mathfrak{T}$ be a groupoid with the set of objects $\left\{ \left[\varepsilon_{i}+\varepsilon_{j}\right],\left[-\varepsilon_{i}-\varepsilon_{j}\right]:i<j\right\} $
and the set of morphisms $\text{Mor}_{\mathfrak{T}}\left([\alpha],[\beta]\right)=\emptyset$
if $\alpha\ne\pm\beta$, and $\text{Mor}_{\mathfrak{T}}\left([\alpha],[\pm\alpha]\right)$
is of cardinality $1$. For every object $[\alpha]$, we denote by
$r_{\alpha}$ the unique morphism in $\text{Mor}_{\mathfrak{T}}\left([\alpha],[-\alpha]\right)$,
and impose the relation $r_{\alpha}r_{-\alpha}=id_{[-\alpha]}.$

The Weyl group $W=S_{n}$ acts on $\Delta(\mathfrak{g}_{0})$ and
we have the naturally defined homomorphism $\Phi$ from $W$ to the
group of autoequivalences of $\mathfrak{T}$. This yields the semidirect
product groupoid $\widetilde{\mathfrak{T}}=W\ltimes\mathfrak{T}$.
The objects of $\widetilde{\mathfrak{T}}$ are the same as the objects
of $\mathfrak{T}$. The morphisms in $\widetilde{\mathfrak{T}}$ are
generated by $r_{\alpha}$ and $w_{\alpha}^{w\alpha}\in\text{Mor}_{\widetilde{\mathfrak{T}}}\left([\alpha],[w\alpha]\right)$,
where $w\in W$ and the following diagrams commute:\[   \begin{tikzcd}     
\left[\alpha\right] \arrow{rr}{\left(uw\right)_{\alpha}^{uw\alpha}} \arrow[swap]{dr}{w_{\alpha}^{w\alpha}}    &  & \left[uw\alpha\right]  \\      
&  \left[w\alpha\right] \arrow[swap]{ur}{u_{w\alpha}^{uw\alpha}} &   \end{tikzcd}\qquad
\begin{tikzcd}     
\left[\alpha\right] \arrow{r}{\left(w\right)_{\alpha}^{w\alpha}} \arrow[swap]{d}{r_{\alpha}}     & \left[w\alpha\right]\arrow{d}{r_{-\alpha}}  \\      
  \left[-\alpha\right] \arrow[swap]{r}{w_{-\alpha}^{-w\alpha}} &  \left[-w\alpha\right] \end{tikzcd}\]

The Weyl groupoid is defined as 
\[
\mathfrak{W}:=W\amalg\widetilde{\mathfrak{T}},
\]
where $W$ is considered as a groupoid with a single point base $[W]$.
\begin{example}
For $\mathfrak{q}(2)$, let $s$ be the nontrivial element of $S_{2}$.
Then $\mathfrak{W}$ takes the following form
\end{example}

\[   \begin{tikzcd}     
\left[S_2\right] 
\arrow[loop left, distance=2.5em, start anchor={[yshift=-1ex]west}, end anchor={[yshift=1ex]west}]{}{s}
 \end{tikzcd}\qquad
\begin{tikzcd}     
\left[\varepsilon_1-\varepsilon_2\right] 
\arrow[swap,bend right=30]{d}{s} 
\arrow[swap,bend right=75]{d}{r_{\varepsilon_2-\varepsilon_1,\varepsilon_1-\varepsilon_2}}       \\      
  \left[\varepsilon_2-\varepsilon_1\right] 
\arrow[swap,bend right=75]{u}{r_{\varepsilon_2-\varepsilon_2,\varepsilon_1-\varepsilon_2}}
\arrow[swap,bend right=30]{u}{s} 
\end{tikzcd}
\]

Let $\mathfrak{H}$ denote the category of all affine subspaces of
$\mathfrak{h}$ with morphisms given by affine transformations. For
any $\alpha=\pm(\varepsilon_{i}-\varepsilon_{j})$, let 
\[
\Pi_{\alpha}=\left\{ (x_{1},\ldots,x_{n})\in\mathbb{C}^{n}\ :\ x_{i}=-x_{j}\right\} .
\]
Note that $\Pi_{\alpha}=\Pi_{-\alpha}$. Define  $\tau_{\alpha}\in\text{Mor}_{\mathfrak{H}}(\Pi_{\alpha},\Pi_{\alpha})$
 by the formula
\[
\tau_{\alpha}(x_{1},\ldots,x_{n})=(x_{1},\ldots,x_{i}+1,\ldots,x_{j}-1,\ldots,x_{n}).
\]
  Define the functor  $F:\mathfrak{W}\to\mathfrak{H}$  by setting
\[
F([\alpha])=\Pi_{\alpha},\quad F([W])=\mathbb{C}^{n},\quad F(r_{\alpha})=\tau_{\alpha},\quad F(w)=w,\quad F(w_{\alpha,w\alpha})=\operatorname{Res}_{\Pi_{\alpha}}w.
\]
A function  $f$  on  $\mathfrak{h}$  is called  $\mathfrak{W}$-invariant
if for any  $\varphi\in\text{Mor}_{\mathfrak{W}}(A,B)$    
\[
F(\varphi)^{*}\operatorname{Res}_{FB}f=\operatorname{Res}_{FA}f.
\]

The description of  $J(Q(n))$  can be formulated as follows:
\begin{thm}
The $\mathbb{Q}$-span of the ring of characters $J(Q(n))$  of finite-dimensional
representations of the supergroup  $Q(n)$  is isomorphic to the ring
 $\left(\mathbb{Q}[x_{1}^{\pm1},\ldots,x_{n}^{\pm1}]\right)^{\mathfrak{W}}$
 of invariants under the Weyl groupoid  $\mathfrak{W}$ as defined
above. 
\end{thm}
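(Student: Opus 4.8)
The plan is to unwind the definition of $\mathfrak{W}$-invariance into explicit conditions on a Laurent polynomial $f$ and recognize those conditions as exactly the ones cutting out $J_{n}$ in \eqref{eq: J_n def}; the theorem then follows at once from Theorem \ref{main theorem}, which identifies $J(Q(n))$ with $J_{n}$. Throughout I read an element of $\mathbb{C}[x_{1}^{\pm1},\ldots,x_{n}^{\pm1}]$ as a regular function on the open subset of $\mathbb{C}^{n}$ where all $x_{i}$ are nonzero, and I read each invariance equation $F(\varphi)^{*}\operatorname{Res}_{FB}f=\operatorname{Res}_{FA}f$ as an identity of rational functions on a dense open part of the affine subspace $FA$. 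Since $F$ is a functor, it is enough to impose these equations for a generating set of morphisms of $\mathfrak{W}$.

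First I would dispose of the component $W$ of $\mathfrak{W}$. It has the single base point $[W]$ with $F([W])=\mathbb{C}^{n}$ and $F(w)=w$, so the equations attached to the loops $w\in W$ say precisely that $f$ is invariant under the coordinate-permutation action of $S_{n}$. Next, for an object $[\alpha]$ the morphism $r_{\alpha}\colon[\alpha]\to[-\alpha]$ has $F(r_{\alpha})=\tau_{\alpha}$ and $F([\alpha])=F([-\alpha])=\Pi_{\alpha}$, so it imposes $\tau_{\alpha}^{*}\bigl(\operatorname{Res}_{\Pi_{\alpha}}f\bigr)=\operatorname{Res}_{\Pi_{\alpha}}f$; its inverse $r_{-\alpha}$, with $F(r_{-\alpha})=\tau_{\alpha}^{-1}$, yields the same condition. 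Finally the morphisms $w_{\alpha}^{w\alpha}$ and the two coherence squares of $\widetilde{\mathfrak{T}}$ add nothing new, because $F(w_{\alpha}^{w\alpha})=\operatorname{Res}_{\Pi_{\alpha}}w$ is a restriction of an element of $W$ and pullback commutes with restriction, so $\bigl(w|_{\Pi_{\alpha}}\bigr)^{*}\bigl(f|_{\Pi_{w\alpha}}\bigr)=\bigl(w^{*}f\bigr)|_{\Pi_{\alpha}}=f|_{\Pi_{\alpha}}$ once $w^{*}f=f$. Hence $f$ is $\mathfrak{W}$-invariant if and only if $f\in\mathbb{C}[x_{1}^{\pm1},\ldots,x_{n}^{\pm1}]^{S_{n}}$ and $\operatorname{Res}_{\Pi_{\alpha}}f$ is $\tau_{\alpha}$-invariant for every object $[\alpha]$.

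It remains to make the $\tau_{\alpha}$-invariance explicit. If $\Pi_{\alpha}=\{x_{i}=-x_{j}\}$, restriction to $\Pi_{\alpha}$ is the substitution $x_{j}\mapsto-x_{i}$, so $g:=\operatorname{Res}_{\Pi_{\alpha}}f$ is a Laurent polynomial in $x_{i}$ and in the $x_{k}$ with $k\notin\{i,j\}$; and $\tau_{\alpha}$ acts on $\Pi_{\alpha}$ by $x_{i}\mapsto x_{i}+1$ while fixing the remaining coordinates, so $\tau_{\alpha}$-invariance of $g$ reads $g(x_{i}+1)=g(x_{i})$. This forces $g$ to be independent of $x_{i}$, since a periodic Laurent polynomial in one variable is constant; equivalently, $f\mid_{x_{i}=-x_{j}}$ is independent of $x_{i}$, and conversely such independence gives $\tau_{\alpha}$-invariance. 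Because $S_{n}$ acts transitively on the pairs $\{i,j\}$ and $f$ is $S_{n}$-invariant, the conditions for all pairs hold as soon as the one for $(1,2)$ does, and this last condition is exactly the evaluation condition in \eqref{eq: J_n def}. Therefore $\bigl(\mathbb{C}[x_{1}^{\pm1},\ldots,x_{n}^{\pm1}]\bigr)^{\mathfrak{W}}$ is precisely the ring $J_{n}$ (with coefficients in $\mathbb{C}$), and by Theorem \ref{main theorem} this ring equals $J(Q(n))$.

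The substantive part of the argument is the dictionary in the second paragraph — in particular the verification that the morphisms $w_{\alpha}^{w\alpha}$ of $\widetilde{\mathfrak{T}}$ and the coherence diagrams impose nothing beyond $S_{n}$-invariance of $f$ and $\tau_{\alpha}$-invariance of its hyperplane restrictions — together with the careful translation of $\tau_{\alpha}$-invariance into the evaluation condition of \eqref{eq: J_n def}. The only analytic ingredient, that a one-variable Laurent polynomial fixed by an integer translation is constant, is elementary, so I do not expect any real obstacle beyond this bookkeeping.
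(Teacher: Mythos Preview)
Your proposal is correct and matches the paper's intent: the paper states this theorem as a direct reformulation of Theorem~\ref{main theorem} and gives no separate proof, so the whole content is precisely the bookkeeping you carry out---showing that the $[W]$-component yields $S_n$-invariance, that the morphisms $w_{\alpha}^{w\alpha}$ add nothing once $S_n$-invariance is in place, and that $\tau_{\alpha}$-invariance of the restriction to $\Pi_{\alpha}$ is equivalent to the evaluation condition in \eqref{eq: J_n def}. Your observation that a Laurent polynomial invariant under an integer shift of one variable must be constant in that variable is exactly the step that converts periodicity into the ``independent of $t$'' clause.
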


\end{document}